\documentclass[twoside]{article}

\usepackage[accepted]{aistats2019}
\usepackage{amsmath,amsthm,epsfig,amssymb,amsbsy}
\usepackage{tikz}
\usepackage{pgfplots}
\usepackage{float}
\usepackage{caption}
\usepackage{subcaption}
\usepackage{enumerate}
\usepackage{comment}
\usepackage{algorithm}
\usepackage{algorithmic}
\usepackage{booktabs}       % professional-quality tables
\usepackage{amsfonts}       % blackboard math symbols
\usepackage{nicefrac}       % compact symbols for 1/2, etc.
\usepackage{relsize}

\usepackage{tikz}
\usetikzlibrary{external}
\tikzexternalize % activate!
\tikzset{external/system call={lualatex
\tikzexternalcheckshellescape -halt-on-error -interaction=batchmode -jobname "\image" "\texsource"}}

\DeclareMathOperator*{\argmin}{arg\,min}

\DeclareMathOperator{\gph}{gph}
\DeclareMathOperator{\dom}{dom}

\DeclareMathOperator{\intr}{int}
\DeclareMathOperator{\bdry}{bdry}

\DeclareMathOperator{\dist}{dist}
\DeclareMathOperator{\lip}{lip}

\newtheorem{theorem}{Theorem}[section]
\newtheorem{corollary}[theorem]{Corollary}
\newtheorem{lemma}[theorem]{Lemma}

\newtheorem{definition}[theorem]{Definition}

\newtheorem*{theorem*}{Theorem}
\newtheorem*{lemma*}{Lemma}

\newcommand{\bR}{\mathbb{R}}

\newcommand{\bN}{\mathbb{N}}

\newcommand{\exR}{\overline{\mathbb{R}}}

\newcommand{\iprod}[2]{\left\langle#1,#2\right\rangle}

\newcommand{\prox}{P_\lambda^\phi}
\newcommand{\env}{e_{\lambda}^\phi}

\newcommand{\hPhi}{\widehat{\phi}}
\newcommand{\hhPhi}{\widehat{\widehat\phi \,}}

\newcommand{\iali}[1]{\begin{align}#1\end{align}}

\definecolor{mydarkblue}{rgb}{0,0.08,0.45}

\usepackage[colorlinks=true,
    linkcolor=mydarkblue,
    citecolor=mydarkblue,
    filecolor=mydarkblue,
    urlcolor=mydarkblue,
    pdfview=FitH]{hyperref}

\urlstyle{same}

% comment out proofs

% If your paper is accepted, change the options for the package
% aistats2019 as follows:
%
%\usepackage[accepted]{aistats2019}
%
% This option will print headings for the title of your paper and
% headings for the authors names, plus a copyright note at the end of
% the first column of the first page.

% If you set papersize explicitly, activate the following three lines:
%\special{papersize = 8.5in, 11in}
%\setlength{\pdfpageheight}{11in}
%\setlength{\pdfpagewidth}{8.5in}

% If you use natbib package, activate the following three lines:
%\usepackage[round]{natbib}
%\renewcommand{\bibname}{References}
%\renewcommand{\bibsection}{\subsubsection*{\bibname}}

% If you use BibTeX in apalike style, activate the following line:
%\bibliographystyle{apalike}

\begin{document}

% If your paper is accepted and the title of your paper is very long,
% the style will print as headings an error message. Use the following
% command to supply a shorter title of your paper so that it can be
% used as headings.
%
\runningtitle{Optimization of Inf-Convolution Regularized Nonconvex Composite Problems}

% If your paper is accepted and the number of authors is large, the
% style will print as headings an error message. Use the following
% command to supply a shorter version of the authors names so that
% they can be used as headings (for example, use only the surnames)
%
\runningauthor{Laude, Wu, Cremers}

\twocolumn[

\aistatstitle{Optimization of Inf-Convolution Regularized\\ Nonconvex Composite Problems}

\aistatsauthor{ Emanuel Laude \And  Tao Wu \And Daniel Cremers }
\aistatsaddress{Department of Informatics, Technical University of Munich, Germany} 
]

\begin{abstract}
In this work, we consider nonconvex composite problems that involve inf-convolution with a \emph{Legendre} function, which gives rise to an anisotropic generalization of the proximal mapping and Moreau-envelope. In a convex setting such problems can be solved via alternating minimization of a splitting formulation, where the consensus constraint is penalized with a Legendre function. In contrast, for nonconvex models it is in general unclear that this approach yields stationary points to the infimal convolution problem. To this end we analytically investigate local regularity properties of the Moreau-envelope function under prox-regularity, which allows us to establish the equivalence between stationary points of the splitting model and the original inf-convolution model.
We apply our theory to characterize stationary points of the penalty objective, which is minimized by the \emph{elastic averaging SGD} (EASGD) method for distributed training. Numerically, we demonstrate the practical relevance of the proposed approach on the important task of distributed training of deep neural networks.
\end{abstract}

\section{Introduction}
In this work, we are interested in optimizing nonconvex composite models which involve \emph{infimal convolutions} with \emph{Legendre} functions:
\begin{equation}\label{eq:composite_inf_conv}
\begin{aligned}
\underset{u\in \bR^n}{\text{minimize}}&& \env f(Au) + g(u).
\end{aligned}
\end{equation}
Here both $f:\bR^m \to \exR:=\bR\cup\{+\infty,-\infty\}$ and $g:\bR^n \to \exR$ are extended real-valued, proper\footnote{A function $f:\bR^m \to \exR$ is called proper if $f(\bar z)<\infty$ for some $\bar z\in \bR^m$ and $f(z)>-\infty$ for all $z\in \bR^m$.}
and lower semi-continuous (lsc) functions which are possibly nonconvex and nonsmooth, and $A \in \bR^{m \times n}$ is a coupling matrix. Let $\dom f:=\{z\in \bR^m: f(z) <\infty\}$ denote the domain of $f$. By $\env f$ we denote the infimal convolution of a function $f$ with some Legendre function $\phi: \bR^m \to \exR$; see Definition \ref{def:legendre}.
The infimal convolution of $f$ with a potential $\phi$ and scaling parameter $\lambda$ is defined as 
\begin{align} \label{eq:inf_conv}
\env f(v)=\inf_{z \in \bR^m} f(z) + \frac{1}{\lambda} \phi(v-z).
\end{align}
We shall refer to $\env f$ as the $\phi$-envelope of $f$, and the corresponding $\argmin$ map
\begin{align} \label{eq:phi_prox}
\prox f(v)=\argmin_{z \in \bR^m} f(z) + \frac{1}{\lambda} \phi(v-z),
\end{align}
as the $\phi$-proximal mapping of $f$ at $v$. 

Note that for $\phi=\frac{1}{2} \|\cdot\|^2$ they specialize to the classical Moreau-envelope and proximal mapping \cite{moreau1962fonctions,moreau1965proximite}. 

Under suitable assumptions that guarantee that the $\inf$ is attained when finite, $\env f$ yields a regularized variant of $f$ in the sense that the epigraph of $\env f$ is obtained via the Minkowski sum of the epigraphs of the individual functions $f$ and $\phi$ \cite[Exercise 1.28]{RoWe98}. 
For convex proper lsc $f$ and Lipschitz differentiable\footnote{A function is called Lipschitz differentiable if it is differentiable and its gradient is Lipschitz continuous.} $\phi$, $\env f$ is a Lipschitz differentiable approximation to $f$.
In contrast, when $f$ is nonconvex and nonsmooth, $\env f$ remains nonsmooth and nonconvex in general which renders the optimization of \eqref{eq:composite_inf_conv} challenging.

Inf-convolution models are well grounded in machine learning and signal processing. A variety of convex and nonconvex loss functions and regularizers can be written as an infimal convolution. There, the potential $\phi$ is chosen in accordance with the underlying noise prior, e.g., quadratic for Gaussian.

In addition, benefits of inf-convolution regularization (with quadratic $\phi$) have been observed empirically in neural network training in recent works \cite{ZCL15,chaudhari2018deep}. 

\subsection{Motivation}
To compute a stationary point of $\eqref{eq:composite_inf_conv}$, we resort to a splitting model
\begin{equation} \label{eq:composite_splitting}
\begin{aligned}
\underset{\substack{u\in \bR^n, ~z \in \bR^m}}{\text{minimize}}~%\left\{ 
F(u, z):= f(z) + \frac{1}{\lambda}\phi(Au - z) + g(u), % \right\},
\end{aligned}
\end{equation}
where the violation of the constraint $Au = z$ is penalized with $\phi$. From Equation~\eqref{eq:inf_conv} it can be seen that model~\eqref{eq:composite_splitting} is equivalent to model~\eqref{eq:composite_inf_conv} in terms of global optima but in general not in terms of stationary points\footnote{A stationary point $(\bar{u}, \bar{z})$ of \eqref{eq:composite_splitting} is a point that satisfies the necessary first order optimality condition $0\in \partial F(\bar{u}, \bar{z})$, cf.~Fermat's rule generalized \cite[Theorem 10.1]{RoWe98}. 
%Here, $\partial F$ denotes the limiting subdifferential, cf.~Definition~\ref{def:general_subdiff}. 
When $(\bar{u}, \bar{z}) \in \dom F$ is feasible and $\phi$ is continuously differentiable on $\dom \phi$ open, $0\in \partial F(\bar{u}, \bar{z})$ is implied by \eqref{eq:cp_composite_splitting_1}--\eqref{eq:cp_composite_splitting_2} via \cite[Exercise 8.8 (c) and Proposition 10.5]{RoWe98}.
}.
The splitting formulation~\eqref{eq:composite_splitting} is amenable to alternating optimization, which, under mild assumptions, converges subsequentially to a stationarity point $(\bar{u}, \bar{z})$ of $F$, satisfying the following conditions (assuming $\dom \phi$ is open): $A\bar{u}-\bar{z} \in \dom \phi$, $\bar{u} \in \dom g$, $\bar{z} \in \dom f$, and
\begin{subequations}
\begin{align}
0 &\in \partial g(\bar{u}) + \tfrac{1}{\lambda} A^\top \nabla \phi(A\bar{u}-\bar{z}), \label{eq:cp_composite_splitting_1} \\
0 &\in \partial f(\bar{z}) - \tfrac{1}{\lambda}\nabla \phi(A\bar{u}-\bar{z}). \label{eq:cp_composite_splitting_2}
\end{align}
\end{subequations}
Here $\partial$ denotes the (limiting) subdifferential of a function, cf.~Definition \ref{def:general_subdiff}.

Meanwhile, in order for $\bar{u}$ to qualify as
a stationary point of the original problem \eqref{eq:composite_inf_conv} it must  satisfy
\begin{align}
0 \in\partial (\env f \circ A + g)(\bar{u}).
\label{eq:cp_composite_inf_conv}
\end{align}
When $\env f$ is smooth around $A \bar{u}$ and $\bar{u}\in \dom g$, the stationarity condition \eqref{eq:cp_composite_inf_conv} simplifies to 
\begin{align}
0 \in A^\top \nabla \env f(A\bar{u}) + \partial g(\bar{u}),
\label{eq:cp_composite_inf_conv_simp}
\end{align} 
via \cite[Exercise 8.8 (c)]{RoWe98}.

It is important to realize that conditions \eqref{eq:cp_composite_splitting_1}--\eqref{eq:cp_composite_splitting_2} do \emph{not} imply \eqref{eq:cp_composite_inf_conv} or \eqref{eq:cp_composite_inf_conv_simp} in general when $f$ is nonconvex.
This stands in stark contrast to the convex setting, where the stationarity condition~\eqref{eq:cp_composite_inf_conv_simp} (for quadratic $\phi=\frac{1}{2}\|\cdot\|^2$) can be guaranteed via the well known gradient formula for the Moreau-envelope \cite[Theorem 2.26]{RoWe98}: 
\begin{align} \label{eq:gradient_moreau_env_convex}
\nabla e_\lambda^{\|\cdot\|^2/2} f(v) = \frac{1}{\lambda} (v - P_\lambda^{\|\cdot\|^2/2} f (v)).
\end{align}
To this end, note that \eqref{eq:cp_composite_splitting_2} resembles the necessary (and in the convex setting also sufficient) optimality condition of the $\phi$-proximal mapping $\bar{z}=\prox (A\bar{u})$.
For this reason, a main focus of this work is to derive sufficient conditions that guarantee the \emph{translation of stationarity} in the more general nonconvex setting for (nonquadratic) $\phi$, which ultimately boils down to the following implication:
\begin{align} \label{eq:impl_cp_composite}
\eqref{eq:cp_composite_splitting_2} ~\Rightarrow ~ \tfrac1\lambda \nabla \phi(A\bar{u}-\bar{z}) =\nabla \env f (A \bar{u}).
\end{align}
In this sense, the implication in \eqref{eq:impl_cp_composite} is a generalization of the gradient formula \eqref{eq:gradient_moreau_env_convex} for the the $\phi$-envelope. 

\subsection{Contributions}
Our contributions are summarized as follows:
\begin{itemize}
\item We consider an anisotropic generalization of the proximal mapping and Moreau-envelope \cite{lescarret1967applications,wexler1973prox,combettes2013moreau} induced by a Legendre function in the nonconvex setting. More precisely we establish local regularity properties of the envelope function and proximal mapping under prox-regularity, including a generalization of the well known gradient formula for the Moreau-envelope. The translation of stationarity is a consequence of this theory.
\item We apply our theory to characterize stationary points of the model that is minimized by the elastic averaging SGD (EASGD) \cite{ZCL15} method for distributed training with anisotropic (i.e., non-quadratic) penalty functions. There, our theory can be invoked to obtain a robust measure of stationarity via the gradient of the Moreau-envelope. 
\item Numerically, we apply our algorithm to distributed training of deep neural networks and showcase merits of anisotropic inf-convolution potentials over standard quadratic in this context.
\end{itemize}

\section{Related Work}
Proximal mappings and Moreau-envelopes date back to the seminal papers of Moreau \cite{moreau1962fonctions,moreau1965proximite}.

\cite{lescarret1967applications,wexler1973prox,combettes2013moreau} consider an anisotropic generalization of the proximal mapping which is obtained by replacing the quadratic penalty with a Legendre function and study its properties in a convex setting. \cite{combettes2013moreau} relates the anisotropic proximal mapping to the Bregman proximal mapping (introduced in \cite{censor1992proximal,teboulle1992entropic} and investigated in \cite{bauschke2003bregman}) via a generalization of Moreau's decomposition \cite{moreau1962fonctions,moreau1965proximite}, which holds for convex functions. The Bregman prox and the anisotropic prox are different generalizations of the classical prox with complementary properties.

In the convex setting the Moreau-envelope has strong regularity properties such as Lipschitz differentiability. In the nonconvex setting the Moreau-envelope is nonsmooth in general. In \cite{PoRo96,RoWe98} the concept of prox-regular functions is introduced, which allows the authors to (locally) recover some of the properties known from the convex setting: These include the (local) single-valuedness of the prox and (local) Lipschitz differentiability of the envelope.

In \cite{lewis2009local,Och18}, prox-regularity is used to establish a local convergence result for alternating and averaged projections methods.
Similar to this work but specialized to quadratic $\phi$, in \cite{LWC18}, prox-regularity is utilized to show a translation of stationarity for the model \eqref{eq:composite_inf_conv} which is computationally resolved by multiblock ADMM.

More recently, in \cite{davis2019stochastic} the gradient of the Moreau-envelope has been used as a stationarity measure in stochastic optimization methods.

Inf-convolution regularization has recently been utilized in neural network training \cite{ZCL15,chaudhari2018deep}:
In \cite{ZCL15} the authors have considered the consensus training of deep neural networks by optimizing a relaxed consensus model of the form \eqref{eq:composite_splitting} with quadratic $\phi$. Similar algorithms were later connected to partial differential equations~\cite{chaudhari2018deep}. 

\section{Anisotropic Proximal Mapping}
In this section, we introduce the notion of Legendre functions that gives rise to an anisotropic generalization of the proximal mapping and Moreau-envelope investigated in the convex setting by \cite{lescarret1967applications,wexler1973prox,combettes2013moreau}. We establish a sufficient condition for the well-definedness of the anisotropic prox (denoted as $\phi$-prox) and envelope (denoted as $\phi$-envelope) in the nonconvex setting, based on a generalized notion of prox-boundedness \cite[Definition 1.23]{RoWe98}.

A proper convex lsc Legendre function is defined below according to \cite[Section 26]{Roc70}. Here $\partial \phi$ reduces to the classical convex subdifferential.
\begin{definition}[Legendre function] \label{def:legendre}
The proper convex lsc function $\phi:\bR^m \to \exR$ is
\begin{enumerate}
\item[\rm (i)] \emph{essentially smooth}, if the interior of the domain of $\phi$ is nonempty, i.e.~$\intr\dom \phi\neq \emptyset$, and $\phi$ is differentiable on $\intr\dom \phi$ such that $\|\nabla\phi(w^\nu)\|\to \infty$ whenever $w^\nu \to w \in \bdry \dom \phi$;
\item[\rm (ii)] \emph{essentially strictly convex}, if $\phi$ is strictly convex on every convex subset of $\dom \partial \phi:=\{w \in \bR^m : \partial \phi(w) \neq \emptyset \}$;
\item[\rm (iii)] \emph{Legendre}, if $\phi$ is both essentially smooth and essentially strictly convex.
\end{enumerate}
\end{definition}
Let $\phi^*$ denote the convex conjugate of $\phi$. Then, Legendre functions have the following essential properties:
\begin{lemma} \label{lem:legendre_props}
Let $\phi:\bR^m \to \exR$ be proper lsc convex and Legendre. Then $\phi$ has the following properties:
\begin{enumerate}
\item[\rm (i)] $\dom \partial \phi = \intr \dom\phi$, \cite[Theorem 26.1]{Roc70}. 
\item[\rm (ii)] $\nabla \phi: \intr \dom\phi \to \intr \dom\phi^*$ is bijective with inverse $\nabla \phi^*: \intr \dom\phi^* \to \intr \dom\phi$ with both $\nabla \phi$ and $\nabla \phi^*$ continuous on $\intr \dom\phi$ resp.~$\intr\dom\phi^*$, \cite[Theorem 26.5]{Roc70}.
\end{enumerate}
\end{lemma}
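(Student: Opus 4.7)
The plan is to observe that both statements are classical theorems from \cite{Roc70} (specifically Theorems 26.1 and 26.5), so the proof will consist of recalling the main arguments rather than inventing anything new. The two parts hinge on essential smoothness, essential strict convexity, and conjugate duality, combined in different ways.

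For part (i), I would prove each inclusion separately. The inclusion $\intr \dom \phi \subseteq \dom \partial \phi$ is a standard consequence of convexity: any proper convex lsc function is locally bounded on $\intr \dom \phi$, hence locally Lipschitz, and a supporting hyperplane argument gives a nonempty subdifferential at every interior point. For the reverse inclusion $\dom \partial \phi \subseteq \intr \dom \phi$, I would argue by contradiction: suppose $w \in \bdry \dom \phi$ and $v \in \partial \phi(w)$. Pick $w^\nu \in \intr \dom \phi$ with $w^\nu \to w$; essential smoothness forces $\|\nabla \phi(w^\nu)\| \to \infty$, while monotonicity of the subdifferential yields $\langle \nabla \phi(w^\nu) - v, w^\nu - w \rangle \geq 0$. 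Combining this with the subgradient inequality and taking $w^\nu$ along a suitable direction in $\dom \phi$ produces a contradiction to the blow-up of $\nabla \phi$.

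For part (ii), the plan rests on three ingredients. First, essential smoothness ensures $\nabla \phi$ is single-valued on $\intr \dom \phi$. Second, essential strict convexity yields injectivity: if $\nabla \phi(w_1) = \nabla \phi(w_2) = v$, then by Fenchel--Young both $w_i$ lie in $\partial \phi^*(v)$, which is convex, and strict convexity of $\phi$ on this convex set forces $w_1 = w_2$. Third, to identify the range as $\intr \dom \phi^*$ with inverse $\nabla \phi^*$, I would invoke the self-duality of the Legendre property, namely that $\phi^*$ is Legendre whenever $\phi$ is, and then apply part (i) to $\phi^*$ to conclude $\dom \partial \phi^* = \intr \dom \phi^*$. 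The equivalence $v = \nabla \phi(w) \iff w = \nabla \phi^*(v)$ from Fenchel--Young then delivers the bijection and the inverse formula. Continuity of $\nabla \phi$ and $\nabla \phi^*$ on their respective interiors is the standard fact that gradients of convex functions are continuous wherever they exist.

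The main obstacle I anticipate is the conjugate symmetry of the Legendre property, i.e.\ that $\phi^*$ is essentially smooth iff $\phi$ is essentially strictly convex (and vice versa). This is the non-obvious link between boundary blow-up of $\nabla \phi$ and strict convexity of $\phi^*$ on convex subsets of $\dom \partial \phi^*$; it requires matching up the characterizations in Definition~\ref{def:legendre} across conjugation and is precisely the content of \cite[Theorem 26.5]{Roc70}. Once this duality is granted, the rest of the argument is a routine assembly of Fenchel--Young and convex-analytic continuity.
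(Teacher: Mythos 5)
The paper offers no proof of this lemma at all---it simply defers to \cite[Theorems 26.1 and 26.5]{Roc70}---and your reconstruction of those classical arguments follows the standard route and correctly isolates the one non-trivial ingredient, namely the conjugate duality between essential smoothness and essential strict convexity. The only step your sketch under-sells is in part (i): the blow-up $\|\nabla\phi(w^\nu)\|\to\infty$ combined with monotonicity does not by itself yield a contradiction, since the gradient could blow up in directions orthogonal to the approach segment; one needs the additional fact (Rockafellar's Lemma 26.2) that the blow-up must have an unbounded component along $w^\nu-a$ for any interior point $a$, which then contradicts the lower bound on the directional derivative coming from $v\in\partial\phi(w)$.
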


Overall we will make the following (additional) assumptions on $\phi$:
\begin{enumerate}
\item[\rm (A1)] $\phi:\bR^m \to \exR$ is proper lsc convex and Legendre.
\item[\rm (A2)]  $\dom \phi$ is open.
\item[\rm (A3)]  $\phi$ is twice continuously differentiable on $\intr\dom \phi$ with positive definite Hessian, i.e., $\nabla^2 \phi(w) \succ 0$ for any $w \in \intr\dom \phi$.
\item[\rm (A4)]  $\phi$ is \emph{super-coercive}, i.e., $\|\phi(w)\|/\|w\|\to \infty$ whenever $\|w\|\to\infty$.
\item[\rm (A5)]  $\phi(0) = 0$ and $\nabla \phi(0) = 0$.
\end{enumerate}
(A2) ensures that $\phi(w^\nu) \to \infty$, whenever $w^\nu \to w \in \bdry \dom \phi$. In particular this allows us to utilize alternating gradient descent steps as updates in our algorithm, cf.~Section~\ref{sec:optimization}. (A3) implies that $\phi$ is ``locally'' strongly convex and Lipschitz differentiable in the sense of \cite[Proposition 2.10]{bauschke2000dykstras}: For any compact and convex $K \subset \dom \partial \phi$, there are constants $\mu,\gamma>0$ such that for any $w_1, w_2 \in K$:
\begin{align*}
&\phi(w_1) \geq \phi(w_2) + \langle \nabla \phi(w_2), w_1 -w_2 \rangle + \frac{\mu}{2}\|w_1-w_2\|^2, \\
&\|\nabla \phi(w_1) - \nabla \phi(w_2) \| \leq \gamma \|w_1-w_2\|.
\end{align*}
Such functions are known under the term \emph{very strictly convex} \cite[Definition 2.8]{bauschke2000dykstras} which lie ``strictly between the class of strongly convex and the class of strictly convex functions'', \cite[Remark 2.9]{bauschke2000dykstras}.
(A4) is required later on in Section~\ref{sec:translation_stat} to show the translation of stationarity.
(A5) is technically not required in our theory. However, it naturally leads to a smoothing which under prox-regularity preserves stationarity (see Corollary~\ref{cor:stat_point_regularized_prox_reg}).

Examples for such $\phi$ include the scaled quadratic $\phi(w)=w^\top Q w$ (with matrix $Q$ symmetric positive definite) or a log-barrier function $\phi(w)= -\log(1- \|w\|^2)$. Further examples are provided in Section~\ref{sec:numerics}.

It is important to realize that for nonconvex $f$ the well-definedness of the $\phi$-proximal mapping and envelope requires additional assumptions: More precisely, we shall guarantee that $\env f$ is proper and $\prox f(v) \neq \emptyset$ for any $v \in \dom \env f$.
In addition, this condition allows us to extract a continuity property for the $\phi$-proximal mapping and envelope which is extensively needed later on to prove the desired translation of stationarity in Section~\ref{sec:translation_stat}:
\begin{definition}[$\phi$-prox-boundedness]
We say $f:\bR^m \to \exR$ is $\phi$-prox-bounded if there exists $\lambda > 0$ such that for any $\bar{v} \in \bR^m$ there exists $\epsilon > 0$ and a constant $\beta > -\infty$ such that
\begin{align} \label{eq:prox_bound_uniform}
\env f(v) \geq \beta
\end{align}
for any $v$ with $\|v-\bar{v}\|\leq \epsilon$. The supremum of the set of all such $\lambda$ is the threshold $\lambda_f$ of the $\phi$-prox-boundedness.
\end{definition}
When $f$ is bounded from below it is $\phi$-prox-bounded with threshold $\lambda_f = \infty$. 
Notably, in the classical case (when $\phi$ is quadratic) the definition can be made minimalistic, cf.~\cite[Definition 1.23]{RoWe98}: It suffices to assume the existence of some $\bar{v} \in \bR^m$ so that $\env f(\bar{v}) > -\infty$.% \tr{TODO: Can we be sure about this? Then we shall find a counterexample1}

Overall we summarize below the properties of $\phi$-prox and envelope under $\phi$-prox-boundedness that shall be used along our course in the next section.
\begin{lemma} \label{lem:continuity}
Let $f:\bR^m \to \exR$ be proper lsc and $\phi$-prox-bounded with threshold $\lambda_f>0$. Then for any $\lambda \in (0,\lambda_f)$, $\prox f$ and $\env f$ have the following properties:
\begin{enumerate}
\item[\rm (i)] $\prox f(v) \neq \emptyset$ is compact for all $v \in \dom \env f=\dom f + \dom \phi$, whereas $\prox f(v) =\emptyset$ for $v \notin \dom \env f$.
\item[\rm (ii)] The $\phi$-envelope $\env f$ is continuous relative to $\dom \env f$.
\item[\rm (iii)] For any sequence $v^\nu \to \bar{v}$ contained in $\dom \env f$ and $z^\nu \in \prox f(v^\nu)$ we have $\{z^\nu\}_{\nu \in \bN}$ is bounded and all its cluster points $\bar{z}$ lie in $\prox f(\bar{v})$.
\end{enumerate}
\end{lemma}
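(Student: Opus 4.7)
The plan hinges on one coercivity bound for the inner minimization, driven by super-coercivity (A4) of $\phi$ together with the neighborhood uniformity built into $\phi$-prox-boundedness. First I would observe that $v \in \dom \env f$ iff there exists $z \in \dom f$ with $v - z \in \dom \phi$, giving $\dom \env f = \dom f + \dom \phi$; when $v$ lies outside this Minkowski sum the inner objective is identically $+\infty$ and by convention $\prox f(v) = \emptyset$, settling the second half of (i).

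For the existence statement in (i), fix $\lambda \in (0,\lambda_f)$, pick any $\lambda' \in (\lambda, \lambda_f)$, and exploit the splitting
\begin{equation*}
f(z) + \tfrac{1}{\lambda}\phi(v-z) = f(z) + \tfrac{1}{\lambda'}\phi(v-z) + \bigl(\tfrac{1}{\lambda}-\tfrac{1}{\lambda'}\bigr) \phi(v-z) \geq e_{\lambda'}^\phi f(v) + \bigl(\tfrac{1}{\lambda}-\tfrac{1}{\lambda'}\bigr) \phi(v-z),
\end{equation*}
noting that (A5) together with convexity forces $\phi \geq 0$. For fixed $v \in \dom \env f$, the right-hand side is a finite constant plus a strictly positive multiple of $\phi(v-z)$; if $\|z^\nu\| \to \infty$ then $\|v - z^\nu\| \to \infty$, and super-coercivity drives $\phi(v - z^\nu) \to \infty$, so the inner objective has bounded sublevel sets. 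Lower semicontinuity of $f$ and $\phi$ then gives attainment of the infimum, and the argmin is closed and bounded, hence compact. Upper semicontinuity of $\env f$ at $\bar v \in \dom \env f$ comes cheaply: take $\bar z \in \prox f(\bar v)$; since (A2) makes $\dom \phi$ open, $v - \bar z \in \dom \phi$ for $v$ near $\bar v$, and continuity of $\phi$ on $\intr \dom \phi$ yields $\limsup_{v\to \bar v} \env f(v) \leq f(\bar z) + \tfrac{1}{\lambda}\phi(\bar v - \bar z) = \env f(\bar v)$.

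The lower-semicontinuity part of (ii) and the cluster-point statement (iii) follow from applying the same coercivity bound along a sequence $v^\nu \to \bar v$ in $\dom \env f$ with $z^\nu \in \prox f(v^\nu)$. Upper semicontinuity already gives $\env f(v^\nu) \leq \env f(\bar v) + 1$ for large $\nu$, while $\phi$-prox-boundedness (with threshold exceeding $\lambda'$) furnishes $\epsilon, \beta$ with $e_{\lambda'}^\phi f(v^\nu) \geq \beta$ for $\|v^\nu - \bar v\| \leq \epsilon$. Plugging both estimates into the bound shows that $\phi(v^\nu - z^\nu)$ is uniformly bounded above; super-coercivity forces $\{v^\nu - z^\nu\}$ bounded, and convergence of $\{v^\nu\}$ then gives $\{z^\nu\}$ bounded. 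Along any convergent subsequence $z^\nu \to \bar z$, lower semicontinuity of $f$ and $\phi$ yields
\begin{equation*}
\liminf_\nu \env f(v^\nu) \geq f(\bar z) + \tfrac{1}{\lambda}\phi(\bar v - \bar z) \geq \env f(\bar v),
\end{equation*}
which simultaneously proves lsc of $\env f$ on $\dom \env f$ and the inclusion $\bar z \in \prox f(\bar v)$.

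The only genuine obstacle is that mere pointwise finiteness of $\env f$ at $\bar v$ would not suffice to control the prox sequence as $v^\nu$ varies; one must exploit the \emph{uniform} lower bound on $e_{\lambda'}^\phi f$ over a neighborhood, which is precisely the content of the neighborhood-quantified definition of $\phi$-prox-boundedness. Every other step reduces to standard consequences of (A2)--(A5) and convexity of $\phi$.
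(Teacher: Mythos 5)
Your proof is correct and rests on the same key mechanism as the paper's: the splitting $f(z)+\tfrac{1}{\lambda}\phi(v-z)\geq e_{\lambda'}^{\phi}f(v)+(\tfrac{1}{\lambda}-\tfrac{1}{\lambda'})\phi(v-z)$ for $\lambda<\lambda'<\lambda_f$, combined with super-coercivity of $\phi$, to obtain locally uniform level-boundedness of the inner objective. The only difference is presentational: the paper packages this as a separate level-boundedness lemma and then invokes the parametric-minimization result \cite[Theorem 1.17]{RoWe98} to deliver (i)--(iii) in one stroke, whereas you re-derive those consequences by hand (attainment, usc via openness of $\dom\phi$, lsc and cluster-point inclusion via the uniform lower bound), which is a valid, self-contained alternative.
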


\section{Translation of Stationarity} \label{sec:translation_stat}
In this section we prove the translation of stationarity, see \eqref{eq:impl_cp_composite}, under prox-regularity: We emphasize that it is a major concern of the splitting approach in the nonconvex setting to justify whether solving the splitting model \eqref{eq:composite_splitting} guarantees solving the original model \eqref{eq:composite_inf_conv}, both in terms of stationarity. 

To this end we recall the definitions of the regular and the limiting subdifferential according to \cite[Definition 8.3]{RoWe98}.
\begin{definition}[subdifferential] \label{def:general_subdiff}
Let $f:\bR^{m} \to \exR$ and $\bar{z}\in\dom f$ be given. For $y\in \bR^{m}$, we say 
\begin{enumerate}
\item[\rm (i)] $y$ is a regular subgradient of f at $\bar{z}$, written $y \in \widehat{\partial} f(\bar{z})$, if
\begin{align*}
\liminf_{\substack{z \to \bar{z} \\ z \neq \bar{z}}} \frac{f(z)-f(\bar{z})- \iprod{y}{z-\bar{z}}}{\|z-\bar{z}\|} \geq 0.
\end{align*}
We refer to the set $\widehat{\partial} f(\bar{z})$ as the regular subdifferential of $f$ at $\bar{z}$.
\item[\rm (ii)] $y$ is a (limiting) subgradient of $f$ at $y$, written $y \in \partial f(\bar{z})$, if there exist $z^\nu \to \bar{z}$ with $f(z^\nu) \to f(\bar{z})$ and $y^\nu \in \widehat{\partial} f(z^\nu)$ with $y^\nu \to y$.
We refer to the set $\partial f(\bar{z})$ as the (limiting) subdifferential of $f$ at $\bar{z}$.
\end{enumerate}
\end{definition}
We remark that for $f$ convex, both the regular and the limiting subdifferential coincide with the classical convex subdifferential, \cite[Proposition 8.12]{RoWe98}.

Next, we define prox-regularity of functions, according to \cite[Definition 13.27]{RoWe98}:
\begin{definition}[prox-regularity of functions]
Assume $f:\bR^m\to\exR$ is lsc and finite at $\bar{z}\in\bR^m$. We say $f$ is prox-regular at $\bar{z}$ for $\bar{y}\in\partial f(\bar{z})$ if there exist $\epsilon>0$ and $r\geq 0$ such that for all $\|z'-\bar{z}\|<\epsilon$
\begin{align}  \label{eq:prox_regularity}
f(z') \geq f(z)+\iprod{y}{z'-z}-\frac{r}{2}\|z'-z\|^2,
\end{align}
whenever $\|z-\bar{z}\|<\epsilon$, $f(z)-f(\bar{z})<\epsilon$, $y\in\partial f(z)$, $\|y-\bar{y}\|<\epsilon$. When this holds for all $\bar{y}\in\partial f(\bar{z})$, f is said to be prox-regular at $\bar{z}$.
\end{definition}
Prox-regularity is a local property in nature. Examples for (everywhere) prox-regular functions include: (i) proper, (weakly) convex, lsc functions; (ii) $\mathcal{C}^2$-functions; and (iii) indicator functions of $\mathcal{C}^2$-manifolds \cite{PoRo96,RoWe98}. For further examples we refer to \cite{PoRo96,RoWe98}.

Based on prox-regularity and $\phi$-prox-boundedness, we now extend \cite[Proposition 13.37]{RoWe98} to $\prox f$ and $\env f$ in our context (see Theorem \ref{thm:prox_regularity}) and eventually derive the translation of stationarity as desired (see Corollary \ref{cor:trstat}).
As a key ingredient in the proof of Theorem \ref{thm:prox_regularity} we may invoke the {generalized implicit function theorem} \cite[Theorem 2.1]{Rob80} \cite[Theorem 2B.5]{DoRo09} to assert $\prox f$ is locally a single-valued, Lipschitz map. 

\begin{theorem} \label{thm:prox_regularity}
Let $f:\bR^m\to\exR$ proper lsc and $\phi$-prox-bounded with threshold $\lambda_f$. Let $\bar{v}\in \bar{z}+\dom\phi$. Then for any $\lambda\in (0,\lambda_f)$ sufficiently small and $f$ finite and prox-regular at $\bar{z}$ for $\bar{y}\in\partial f(\bar{z})$ with 
\begin{align*}
\bar{y} =\frac1\lambda \nabla \phi(\bar{v}-\bar{z})
\end{align*}
the following statements hold true:
\begin{itemize}
\item[\rm (i)] $\prox f$ is a singled-valued, Lipschitz 
 map near $\bar{v}$ such that 
$\bar{z}=\prox f(\bar{v})$ and 
\begin{align}
\prox f(v) 
&= (I + \nabla \phi^* \circ \lambda T)^{-1}(v),
\end{align}
where $T$ is the $f$-attentive $\epsilon$-localization of $\partial f$ near $(\bar{z},\bar{y})$, i.e. the set-valued mapping $T:\bR^m \rightrightarrows \bR^m$ defined by $T(z):=\{y \in \partial f(z) : \|y -\bar{y}\|< \epsilon\}$
if $\|z - \bar{z}\| < \epsilon$ and $f(z)<f(\bar{z})+\epsilon$, and $T(z):=\emptyset$ otherwise.
\item[\rm (ii)] $\env f$ is Lipschitz differentiable around $\bar{v}$ with 
\begin{equation} \label{eq:gradient_formula}
\nabla \env f(v)=\frac1\lambda\nabla \phi(v-z).
\end{equation}
\end{itemize}
\end{theorem}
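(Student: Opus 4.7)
My plan is to mirror the proof of \cite[Proposition 13.37]{RoWe98}, replacing the classical quadratic prox by its anisotropic $\phi$-analogue. The starting point is the first-order necessary condition $\tfrac{1}{\lambda}\nabla\phi(v-z)\in\partial f(z)$ for $z\in\prox f(v)$, equivalent via Lemma~\ref{lem:legendre_props}(ii) to the generalized equation $v\in z+\nabla\phi^*(\lambda\,\partial f(z))$. The hypothesis $\bar y=\tfrac{1}{\lambda}\nabla\phi(\bar v-\bar z)\in\partial f(\bar z)$ exhibits $\bar z$ as a solution at $v=\bar v$ and identifies the anchor $(\bar z,\bar y)$ around which the $f$-attentive localization $T$ is formed.

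To establish that $\prox f$ is single-valued near $\bar v$, I would combine Lemma~\ref{lem:continuity}(iii), which says every selection $z^\nu\in\prox f(v^\nu)$ with $v^\nu\to\bar v$ has cluster points in $\prox f(\bar v)$, with a local strict-convexity argument: the prox-regularity inequality at $(\bar z,\bar y)$ provides a one-sided quadratic lower bound on $f$ with modulus $-r$ on an $\epsilon$-ball around $\bar z$, while assumption (A3) (via \cite[Proposition 2.10]{bauschke2000dykstras}) supplies strong convexity of $\phi(v-\cdot)$ with some modulus $\mu>0$ on a compact convex set containing the relevant images. Choosing $\lambda<\mu/r$ makes $z\mapsto f(z)+\tfrac{1}{\lambda}\phi(v-z)$ strongly convex on that neighborhood, so the local minimizer isolated by Lemma~\ref{lem:continuity} is unique and forced to coincide with $\bar z$ at $v=\bar v$.

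For Lipschitz continuity and the formula $\prox f=(I+\nabla\phi^*\circ\lambda T)^{-1}$, I would apply the generalized implicit function theorem \cite[Theorem 2B.5]{DoRo09} to the inclusion $v\in z+\nabla\phi^*(\lambda T(z))$. Prox-regularity guarantees that $T+rI$ is locally monotone, and (A3) together with Lemma~\ref{lem:legendre_props}(ii) yields strong monotonicity and Lipschitz continuity of $\nabla\phi^*$ on a compact neighborhood of $\bar v-\bar z$ in $\intr\dom\phi^*$. For $\lambda$ small enough, these combine to give strong metric regularity of $z\mapsto z+\nabla\phi^*(\lambda T(z))$ at $(\bar z,\bar v)$, whose single-valued Lipschitz inverse is precisely the claimed expression for $\prox f$.

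Part (ii) follows by a standard sandwich: with $z=\prox f(v)$ and $z'=\prox f(v')$,
\begin{equation*}
\tfrac{1}{\lambda}\bigl(\phi(v'-z')-\phi(v-z')\bigr)\le \env f(v')-\env f(v)\le \tfrac{1}{\lambda}\bigl(\phi(v'-z)-\phi(v-z)\bigr).
\end{equation*}
A first-order expansion of $\phi$ (available by (A3)) together with the Lipschitz continuity of $\prox f$ from part (i) pins down $\nabla\env f(v)=\tfrac{1}{\lambda}\nabla\phi(v-z)$; Lipschitz continuity of this gradient then follows by composition of Lipschitz maps on the relevant compact neighborhood. The main technical obstacle is calibrating all quantifiers so that a single $\lambda>0$ simultaneously lies below the $\phi$-prox-boundedness threshold $\lambda_f$, satisfies the strict-convexity ratio $\lambda<\mu/r$, and accommodates the neighborhood size required by the generalized implicit function theorem for the anisotropic resolvent $z\mapsto z+\nabla\phi^*(\lambda T(z))$, which replaces the familiar $(I+\lambda T)^{-1}$ of the Euclidean case.
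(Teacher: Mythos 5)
Your overall architecture matches the paper's: the first-order condition $\tfrac1\lambda\nabla\phi(v-z)\in\partial f(z)$, the $f$-attentive localization $T$, strong monotonicity from prox-regularity plus (A3), the generalized implicit function theorem for part (i), and a sandwich for part (ii). But there is a genuine gap in your treatment of part (i): everything you derive from prox-regularity and (A3) is \emph{local} -- it shows that $\bar z$ is the unique minimizer of $z\mapsto f(z)+\tfrac1\lambda\phi(\bar v-z)$ within the $\epsilon$-ball where the subgradient inequality holds. The theorem, however, asserts $\bar z=\prox f(\bar v)$ and single-valuedness of $\prox f$, and $\prox f$ is the \emph{global} argmin map. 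Nothing in your argument excludes a competing global minimizer outside the $\epsilon$-ball; and your appeal to Lemma~\ref{lem:continuity}(iii) is circular, since that lemma only sends cluster points into $\prox f(\bar v)$, which you have not yet identified with $\{\bar z\}$. The paper closes this hole with a dedicated globalization lemma: using $\phi$-prox-boundedness (finiteness of $\env f(\bar v)$) together with the super-coercivity assumption (A4), the local subgradient inequality $f(z)\geq f(\bar z)+\iprod{\bar y}{z-\bar z}-r_1 B_{\phi(\bar v-\cdot)}(z,\bar z)$ is extended to \emph{all} $z$ for $r_1$ large and $\lambda$ small, which is what yields $\prox f(\bar v)=\{\bar z\}$ and lets the continuity lemma then confine $\prox f(v)$ to the localization for $v$ near $\bar v$. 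This is the one place where (A4) enters, and your proposal never uses it. A secondary, more technical omission: to invoke the implicit function theorem you need not only that $T-\tfrac1\lambda\nabla\phi(\bar v-\cdot)$ is strongly monotone (which gives single-valuedness and Lipschitzness of the inverse \emph{where nonempty}) but also that the tilted inclusion $\xi\in T(z)-\tfrac1\lambda\nabla\phi(\bar v-z)$ is solvable for all $\xi$ near $0$; the paper obtains this from a tilted parametric minimization $f(z)+\tfrac1\lambda\phi(v-z)-\iprod{\xi}{z}$ and its uniform level-boundedness. Also note that monotonicity is naturally established for $T-\tfrac1\lambda\nabla\phi(\bar v-\cdot)$, not for the composed resolvent $z\mapsto z+\nabla\phi^*(\lambda T(z))$ as you phrase it -- composition with $\nabla\phi^*$ does not preserve monotonicity, so you should run the estimate in the paper's form.

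Your part (ii) is correct and in fact takes a slightly different, cleaner route than the paper: your two-sided bound
\begin{equation*}
\tfrac1\lambda\bigl(\phi(v'-z')-\phi(v-z')\bigr)\;\leq\;\env f(v')-\env f(v)\;\leq\;\tfrac1\lambda\bigl(\phi(v'-z)-\phi(v-z)\bigr)
\end{equation*}
uses only minimality twice, whereas the paper's lower bound goes through the observation that $y=\tfrac1\lambda\nabla\phi(v-z)$ is a proximal (hence regular) subgradient of $f$ at $z$ and expands $f(z')-f(z)$. Both need the Lipschitz continuity of $\prox f$ from part (i) -- in your version to replace $\nabla\phi(v-z')$ by $\nabla\phi(v-z)$ up to $O(\|v'-v\|^2)$ -- so part (ii) still stands or falls with the repaired part (i).
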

\begin{proof}
We provide a short proof sketch for part (i). For a detailed proof of part (i) and part (ii) we refer to the supplements.

(i) Using the definition of prox-regularity, the assumptions in the theorem and the continuity property of the prox from Lemma~\ref{lem:continuity} (which holds under $\phi$-prox-boundedness) it can be proven that for some $\lambda \in (0, \lambda_f)$ sufficiently small for any $\xi$ sufficiently near $0$ we have
$
\xi \in T(z)-\frac1\lambda\nabla \phi(\bar{v}-z),
$
for some $z$ near $\bar{z}$. Furthermore, using the definition of prox-regularity and Assumption (A3), it can be shown that $T-\frac1\lambda \nabla \phi(\bar{v}-\cdot)$ is strongly monotone.

This implies that $\xi\mapsto\left(T-\frac1\lambda\nabla \phi(\bar{v}-\cdot)\right)^{-1}(\xi)$ is a single-valued, Lipschitz map in a neighborhood of $0$ such that $\left(T-\frac1\lambda\nabla \phi(\bar{v}-\cdot)\right)^{-1}(0)=\bar{z}$. 
Invoking the generalized implicit function theorem %\cite[Theorem 2.1]{Rob80} 
\cite[Theorem 2B.7]{DoRo09}, we assert that $v\mapsto\prox f(v)=\left(T-\frac1\lambda\nabla \phi(v-\cdot)\right)^{-1}(0)$ is a single-valued, Lipschitz map in a neighborhood of $\bar{v}$ 
such that $\bar{z}=\prox f(\bar{v})$.
\end{proof}

As an immediate consequence of the above theorem, the implication in \eqref{eq:impl_cp_composite} holds true and the translation of stationarity is attained under prox-regularity.

\begin{corollary}[translation of stationarity] \label{cor:trstat}
Let $(\bar{u},\bar{z})$ be a stationary point for the splitting model \eqref{eq:composite_splitting} satisfying $A\bar{u}-\bar{z} \in \dom \phi$, $\bar{u} \in \dom g$, $\bar{z} \in \dom f$ and conditions \eqref{eq:cp_composite_splitting_1}--\eqref{eq:cp_composite_splitting_2}. Let $f:\bR^m\to\exR$ be $\phi$-prox-bounded with threshold $\lambda_f$ and prox-regular at $\bar{z}$ for $\bar{y}:=\frac{1}{\lambda}\nabla \phi(A\bar{u}-\bar{z})$. Then, for $\lambda \in (0, \lambda_f)$ sufficiently small, the stationarity condition \eqref{eq:cp_composite_inf_conv_simp} is fulfilled, i.e., $\bar{u}$ is a stationary point for the inf-convolution model \eqref{eq:composite_inf_conv}.
\end{corollary}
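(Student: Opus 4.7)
The plan is to verify the hypotheses of Theorem \ref{thm:prox_regularity} at the point $(\bar{u},\bar{z})$ and then feed the resulting gradient formula into the first stationarity condition \eqref{eq:cp_composite_splitting_1}. Concretely, set $\bar{v} := A\bar{u}$. The assumption $A\bar{u}-\bar{z}\in\dom\phi$ gives $\bar{v}\in\bar{z}+\dom\phi$, which is the geometric requirement in the theorem. Moreover, rewriting condition \eqref{eq:cp_composite_splitting_2} as $\bar{y} := \frac{1}{\lambda}\nabla\phi(A\bar{u}-\bar{z}) \in \partial f(\bar{z})$ identifies exactly the subgradient at which $f$ is assumed to be prox-regular.

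Next, I would apply Theorem \ref{thm:prox_regularity} with this choice of $\bar{v}$ and $\bar{y}$. Part (i) yields that for $\lambda\in(0,\lambda_f)$ sufficiently small the $\phi$-proximal mapping is single-valued and Lipschitz near $\bar{v}$ with $\bar{z}=\prox f(\bar{v})$; part (ii) then gives that $\env f$ is Lipschitz differentiable near $\bar{v}=A\bar{u}$ with
\begin{equation*}
\nabla \env f(A\bar{u}) \;=\; \tfrac{1}{\lambda}\nabla\phi(A\bar{u}-\bar{z}).
\end{equation*}

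Finally, I would substitute this identity directly into \eqref{eq:cp_composite_splitting_1}, which reads $0 \in \partial g(\bar{u}) + \frac{1}{\lambda}A^\top\nabla\phi(A\bar{u}-\bar{z})$, to obtain $0 \in \partial g(\bar{u}) + A^\top\nabla \env f(A\bar{u})$. Because $\env f$ is smooth around $A\bar{u}$ and $\bar{u}\in\dom g$, the sum and chain rules from \cite[Exercise 8.8 (c)]{RoWe98} (as already invoked in the introduction to derive \eqref{eq:cp_composite_inf_conv_simp} from \eqref{eq:cp_composite_inf_conv}) apply and yield $0 \in \partial(\env f\circ A + g)(\bar{u})$, which is the desired stationarity condition \eqref{eq:cp_composite_inf_conv}.

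There is not really a hard step: the entire weight of the argument sits inside Theorem \ref{thm:prox_regularity}, and once one matches $(\bar{v},\bar{y})$ to $(A\bar{u}, \frac{1}{\lambda}\nabla\phi(A\bar{u}-\bar{z}))$ the corollary follows by direct substitution. The only point that requires care is making sure the threshold on $\lambda$ from Theorem \ref{thm:prox_regularity} is respected, which is explicitly inherited by the statement of the corollary through the phrase ``for $\lambda\in(0,\lambda_f)$ sufficiently small.''
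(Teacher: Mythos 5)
Your proposal is correct and follows essentially the same route as the paper's own proof: invoke Theorem \ref{thm:prox_regularity} at $(\bar{v},\bar{y})=(A\bar{u},\tfrac{1}{\lambda}\nabla\phi(A\bar{u}-\bar{z}))$, obtain the gradient identity $\nabla\env f(A\bar{u})=\tfrac1\lambda\nabla\phi(A\bar{u}-\bar{z})$, substitute into \eqref{eq:cp_composite_splitting_1}, and finish with \cite[Exercise 8.8 (c)]{RoWe98}. No gaps; your explicit check that \eqref{eq:cp_composite_splitting_2} supplies the required subgradient $\bar{y}\in\partial f(\bar{z})$ is a nice touch the paper leaves implicit.
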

\begin{proof}
Invoking Theorem \ref{thm:prox_regularity} with $(\bar{z},\frac{1}{\lambda}\nabla \phi(A\bar{u}-\bar{z}))$ and $\lambda \in (0, \lambda_f)$ sufficiently small, we obtain $A^\top \nabla \env f(A\bar{u})=\frac1\lambda A^\top \nabla \phi(A\bar{u}-\bar{z})$. In combination with \eqref{eq:cp_composite_splitting_1}, this yields 
$
0 \in A^\top \nabla \env f(A\bar{u}) + \partial g(\bar{u}).
$
Since $\env f$ is continuously differentiable around $A \bar{u}$ and $\bar{u} \in \dom g$, this implies \eqref{eq:cp_composite_inf_conv_simp} due to \cite[Exercise 8.8 (c)]{RoWe98}.
\end{proof}

\section{Application to Distributed Training} \label{sec:optimization}
In this section, we present a stochastic alternating minimization scheme (Algorithm \ref{alg:salm}) tailored to distributed empirical risk minimization in \eqref{eq:model_easgd}.
As an interesting special case, we consider scenarios where all workers have access to the entire training set and the method specializes to elastic averaging SGD (EASGD) \cite{ZCL15}. 
\subsection{Inexact Alternating Minimization}
For the optimization of the splitting problem \eqref{eq:composite_splitting} one typically resorts to alternating minimization where the variables $u$ and $z$ are updated as:
\begin{subequations}
\begin{align}
u^{t+1} \in \displaystyle \argmin_{u\in\bR^n} ~g(u)+ \frac{1}{\lambda}\phi(Au-z^t), \\
z^{t+1} \in \displaystyle \argmin_{z\in \bR^m} ~ \frac{1}{\lambda}\phi(Au^{t+1}-z) + f(z).
\end{align}
\end{subequations}
Such a scheme is known as the Gauss-Seidel or block coordinate descent method and has been investigated in a general setting by, e.g., \cite{auslender1976optimisation,bertsekas1989parallel,tseng2001convergence}.

In our case we regard alternating minimization as a generic scheme where the subproblems (in particular the $z$-update) may be solved approximately (e.g., by replacing the function with a surrogate) as long as convergence to a stationary point of the splitting problem can be guaranteed. Then, the translation of stationarity from Corollary~\ref{cor:trstat} applies under prox-regularity.

For instance, the vanilla Gauss-Seidel method can be extended with a proximal regularization as in \cite{attouch2010proximal}. When $\phi$ is Lipschitz differentiable, the coupling term $\phi(Au-z)$ can be replaced by a proximal linearization in $z$ (resp.~$u$) at $z^t$ (resp.~$u^t$), so that the updates become proximal gradient steps on $F$ as in proximal alternating linearized minimization (PALM) \cite{bolte2014proximal}.

Alternating minimization can also incorporate stochastic gradient updates, as described in the next subsection.

\subsection{Stochastic Alternating Minimization for Distributed Training} \label{sec:samdt}
In distributed learning, a set of $M$ workers collaborates on the training of a model parameterized by consensus weights $u$. To this end, with access to a prescribed subset (indexed by $\mathcal{I}_j$) of the full training set (indexed by $\mathcal{I}$), each individual worker trains its local copy $z_j$ of the model parameters $u$ under a (relaxed) consensus constraint $u=z_j$. As a consequence, all workers can update their copies in parallel.

In terms of the model \eqref{eq:composite_splitting}, this is formulated as follows. For the remainder of this section let $f:\bR^{nM} \to \bR$ with %$m:=nM$ and
$$
f(z)=\sum_{j=1}^M f_j(z_j)
$$
be a separable sum of (regularized) continuously differentiable empirical risks $f_j:\bR^{n} \to \bR$, each assigned to worker $j$.
More specifically each $f_j$ is defined as
$$
f_j(z_j)=  \frac1{|\mathcal{I}_j|} \sum_{i\in \mathcal{I}_j} \ell(h_i, H(x_i; z_j)) + R(z_j),
$$
for training pairs $(x_i, h_i)_{i \in \mathcal{I}_j}$, some prediction function $H(\cdot;z)$ parameterized by weights $z$, a loss function $\ell(\cdot,\cdot)$ and a regularizer $R$. Furthermore, let $g:=0$ and $A:=[I, \cdots, I]^\top \in \bR^{(nM) \times n}$, and set the Legendre penalty as
$$
\phi(w):= \sum_{j=1}^M \hPhi(w_j)
$$
which is separable over the copies $w_j$. Overall model~\eqref{eq:composite_splitting} then reads:
\begin{align} \label{eq:model_easgd}
\min_{u, (z_j)_{j=1}^M \in \bR^{n}} F(u,z)=\sum_{j=1}^M f_j(z_j) + \frac1\lambda\hPhi(u - z_j),
\end{align}
where $\hPhi(u - z_j)$ loosely enforces the consensus constraint $u = z_j$.

The stochastic instance of the alternating minimization scheme is formulated in Algorithm~\ref{alg:salm} below. Notably, Algorithm~\ref{alg:salm} specializes to EASGD \cite{ZCL15} in the isotropic case, i.e. when $\phi$ is quadratic and all workers have access to the entire training set $\mathcal{I}$, i.e. $\mathcal{I}_j= \mathcal{I}$.
\setlength{\textfloatsep}{10pt}
\begin{algorithm}[h!]
\caption{Stochastic Alternating Linearized Minimization}
\label{alg:salm}
\begin{algorithmic}[1]
\FORALL{$t=1,2,\ldots$}
\STATE Choose proper step sizes $\sigma_t, \tau > 0$.
\STATE $u^{t+1} = u^t - \tau A^\top \nabla \phi( Au^t - z^{t})$.
\STATE Draw a sample of the random variable $\xi^t$.
\STATE Compute $\Delta(z^t; \xi^t)$ as an unbiased estimate of the gradient of $f+\frac1\lambda \phi(Au^{t+1} - \cdot)$ at $z^t$.
\STATE $z^{t+1} = z^t - \sigma_t \Delta(z^t; \xi^t)$.
\ENDFOR
\end{algorithmic}
\end{algorithm}

To obtain an unbiased estimate of the gradient $\nabla_z F(u^{t+1}, z^{t})$ of $F(u^{t+1}, \cdot)= f + \frac1\lambda \phi(A u^{t+1} - \cdot)$ at $z^t$, worker $j$ samples a uniformly random minibatch $\mathcal{B}_j^t$ from $\mathcal{I}_j$, and computes the standard stochastic gradient, %\tr{[letter $g$ is used already.]}
\begin{align}
\delta_j^t =\frac{1}{|\mathcal{B}_j^t|}\sum_{i\in \mathcal{B}_j^t} \nabla (\ell(h_i, H(x_i; \cdot)))(z_j^t). 
\end{align}
Then we may define $\Delta(z^t; \xi^t):=(\Delta_j(z_j^t; \xi_j^t))_{j=1}^M$ for
\begin{align}
\Delta_j(z_j^t; \xi_j^t) = \delta_j^t + \nabla R(z_j^t) - \frac1\lambda \nabla \phi(u^t - z_j^t).
\end{align}

The $u$-update in Algorithm~\ref{alg:salm} combines the current model parameters $z_j^t$ of the workers into a consensus model $u^{t+1}$. Notably, for the isotropic case and the particular choice $\tau=1/M$, the consensus update reduces to the arithmetic mean of the copies~$z_j^t$.

As a main difference to EASGD, for general $\phi$ the $u$-update can be regarded as a more general form of averaging. For instance, for the non-admissible choice $\phi=\|\cdot\|_1$, it is well known that minimization of $F$ w.r.t.~$u$ yields the (componentwise) median.

A convergence proof for the stochastic method is beyond the scope of the paper. Instead we focus on the characterization of the stationary points $(\bar{u}, \bar{z})$, that the algorithm attempts to find.

Invoking the translation of stationarity reveals that the solution $\bar u$ is stationary w.r.t.~the sum of $\phi$-envelopes $\sum_{j=1}^M e_\lambda^{\hPhi}f_j$.
If furthermore all workers can sample from the entire training set $\mathcal{I}$, i.e., $\mathcal{I}_j= \mathcal{I}$ and therefore all $f_j = \hat{f}$ are equal, perfect consensus $\bar u = \bar z_j$ holds at stationary points for some finite penalty parameter $1/\lambda>0$, which translates to a stationary point of the unregularized problem $\min_u \hat{f}(u)$ satisfying $0\in \partial \hat{f}(\bar u)$.
This is not trivial as the workers may follow different paths to different stationary points if not coupled tightly (due to stochasticity).
In addition, our theory shows that the $\phi$-envelope is Lipschitz differentiable at $\bar u$ and $\nabla \env \hat f(\bar u) = 0$, implying that whenever $u^\nu \to \bar u$ it holds that $\|\nabla \env \hat f(u^\nu)\| \to 0$. The gradient norm of the $\phi$-envelope may thus serve as a measure of stationarity, more robust compared to $\dist(0,\partial \hat f(u^\nu))\to 0$, see~\cite{davis2019stochastic}.
All of these properties are formally stated in the following corollary. 

\begin{corollary} \label{cor:stat_point_regularized_prox_reg}
Let $(\bar{u},\bar{z})$ be a stationary point for the splitting model \eqref{eq:model_easgd} satisfying $\bar{u}-\bar{z}_j \in \dom \hPhi$ and conditions \eqref{eq:cp_composite_splitting_1}--\eqref{eq:cp_composite_splitting_2}. If $f:\bR^m\to\exR$ is prox-regular at $\bar{z}$ for $\bar y = \frac1\lambda \nabla \phi(A\bar u - \bar z)$ and $\phi$-prox-bounded with threshold $\lambda_f$, then for $\lambda \in (0, \lambda_f)$ sufficiently small, $\bar u$ is a stationary point of $\sum_{j=1}^M e_\lambda^{\hPhi}f_j$, i.e., $\sum_{j=1}^M e_\lambda^{\hPhi}f_j$ is Lipschitz differentiable around $\bar u$ and
$$\sum_{j=1}^M \nabla e_\lambda^{\hPhi}f_j(\bar u) = 0.$$
If in addition $\mathcal{I}_j = \mathcal{I}$ for all $1\leq j \leq M$ and therefore $f_j = \hat{f}$ the stationarity condition reduces to
$$
0 = \nabla e_\lambda^{\hPhi} \hat{f}(\bar{u}),
$$
\begin{figure*}[h!]
\centering
\begin{subfigure}[b]{0.4\linewidth}
        \includegraphics[width=\textwidth]{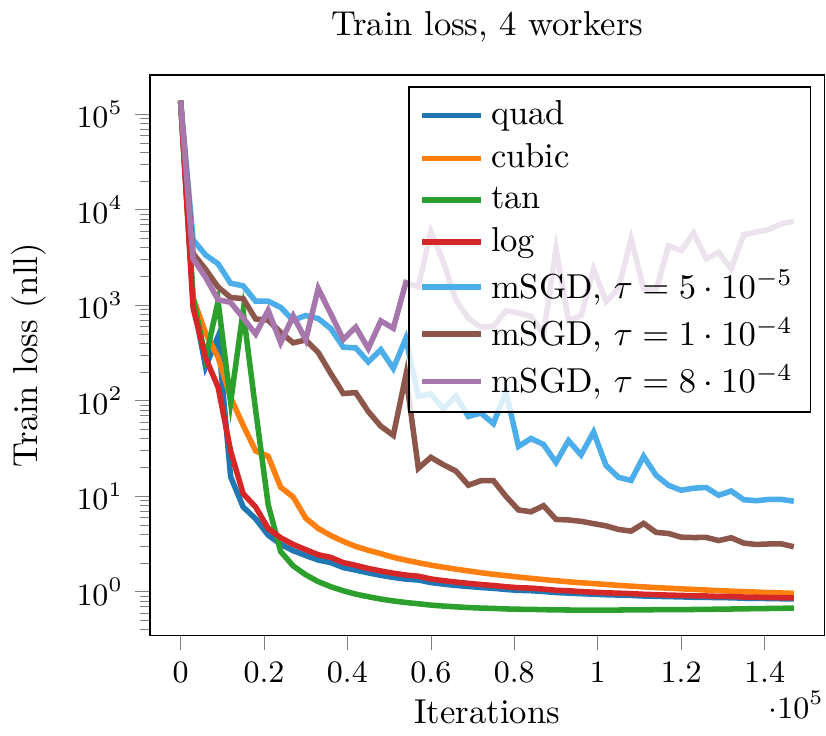}	
\end{subfigure}
\begin{subfigure}[b]{0.4\linewidth}
	\includegraphics[width=\textwidth]{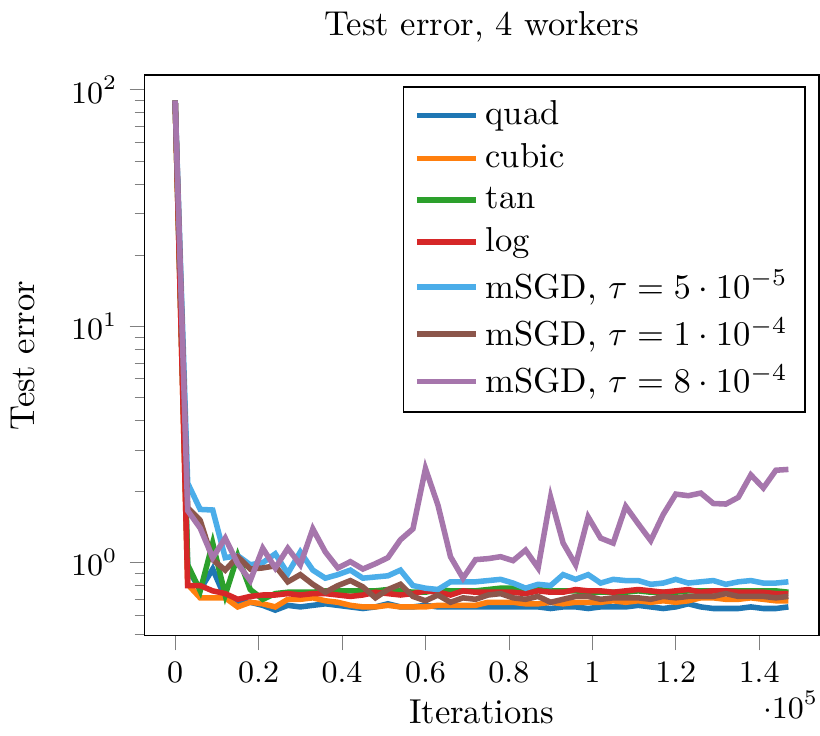} 
\end{subfigure}\\
\begin{subfigure}[b]{0.4\linewidth}
        \includegraphics[width=\textwidth]{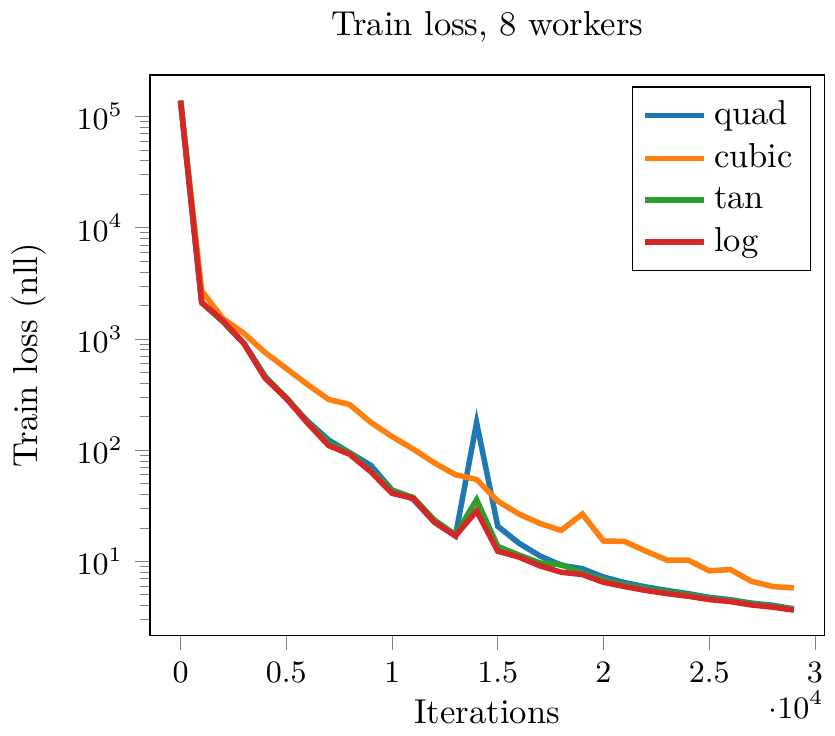}	
\end{subfigure}
\begin{subfigure}[b]{0.4\linewidth}
	\includegraphics[width=\textwidth]{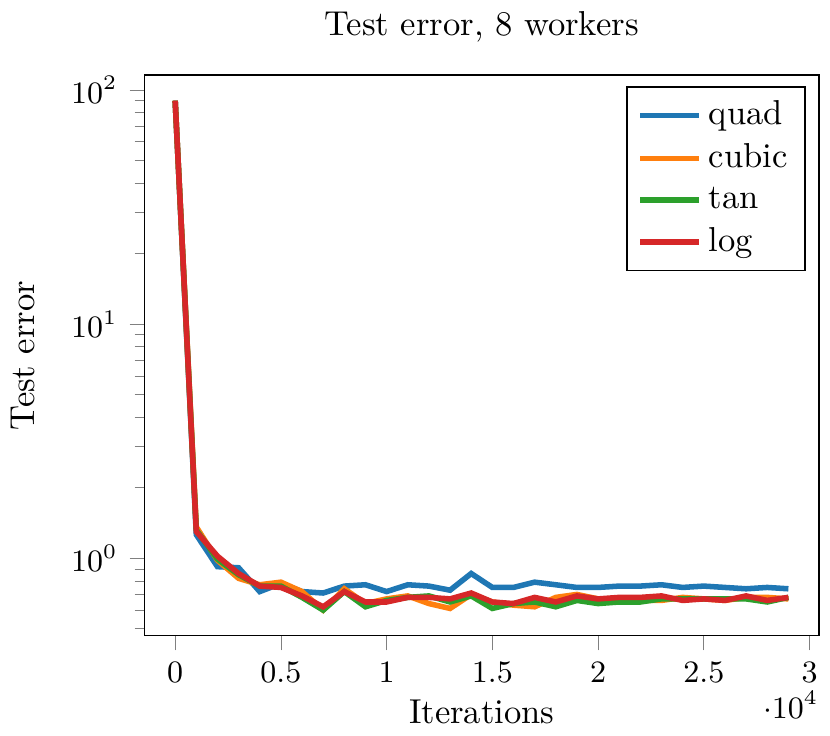}
\end{subfigure}
\caption{Convergence plots for stochastic distributed training with Algorithm~\ref{alg:salm} and classical Nesterov momentum SGD on MNIST for 4 workers (upper row) and 8 workers (lower row). In the 4 workers setting each worker completes 50 epochs. In the 8 worker setting each worker completes 10 epochs. In both cases our method is run with a learning rate of $\tau=0.005$, much higher than the highest one possible with SGD (for $\tau=8\cdot10^{-4}$ mSGD already becomes unstable). For all algorithms the batch-size is 20.}
\label{fig:mnist}
\end{figure*}
and it holds that,
$\bar{u}=\bar{z}_j$ for all $j$ and $0 \in \partial \hat{f}(\bar{u})$.
\end{corollary}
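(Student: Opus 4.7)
The plan is to view the distributed splitting model \eqref{eq:model_easgd} as a special case of the general composite splitting \eqref{eq:composite_splitting} with $g \equiv 0$, stacking matrix $A = [I, \dots, I]^\top$, fully separable $f(z) = \sum_{j=1}^M f_j(z_j)$, and separable Legendre potential $\phi(w) = \sum_{j=1}^M \hPhi(w_j)$. Block separability of $f$ and $\phi$ immediately yields the factorization
$$
\env f(Au) \;=\; \sum_{j=1}^M e_\lambda^{\hPhi} f_j(u),
$$
and reduces prox-regularity of $f$ at $\bar z$ for $\bar y$ and $\phi$-prox-boundedness of $f$ to the corresponding componentwise properties of each $f_j$ at $\bar z_j$ for $\bar y_j = \tfrac{1}{\lambda}\nabla \hPhi(\bar u - \bar z_j)$. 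Invoking Corollary \ref{cor:trstat} for $\lambda \in (0,\lambda_f)$ sufficiently small, the hypothesis $g=0$ turns \eqref{eq:cp_composite_inf_conv_simp} into $A^\top \nabla \env f(A\bar u) = 0$, while Theorem \ref{thm:prox_regularity}(ii) applied blockwise gives Lipschitz differentiability of each $e_\lambda^{\hPhi} f_j$ around $\bar u$. Combining these via the chain rule produces the first claim
$$
\sum_{j=1}^M \nabla e_\lambda^{\hPhi} f_j(\bar u) \;=\; A^\top \nabla \env f(A\bar u) \;=\; 0.
$$

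For the homogeneous case $f_j = \hat f$, the additional content is consensus $\bar u = \bar z_j$ together with $0 \in \partial \hat f(\bar u)$. First I would apply Theorem \ref{thm:prox_regularity}(i) separately at each pair $(\bar z_j, \bar y_j)$ with base point $\bar v = \bar u$; this identifies every $\bar z_j$ with the now locally single-valued prox $P_\lambda^{\hPhi} \hat f(\bar u)$, so all $\bar z_j$ collapse to a common value $\bar z$. Condition \eqref{eq:cp_composite_splitting_1} then reduces to $\tfrac{M}{\lambda}\nabla \hPhi(\bar u - \bar z) = 0$, and Lemma \ref{lem:legendre_props}(ii) combined with assumption (A5) makes $\nabla \hPhi$ a bijection on $\intr \dom \hPhi$ whose only zero is at the origin, forcing $\bar u = \bar z$. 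Substituting $\bar u = \bar z$ into \eqref{eq:cp_composite_splitting_2} finally delivers $0 = \tfrac{1}{\lambda}\nabla \hPhi(0) \in \partial \hat f(\bar u)$.

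The decisive step is the identification, via prox-regularity, of the a priori merely first-order-stationary points $\bar z_j$ of the inner subproblems with a common single-valued prox $P_\lambda^{\hPhi} \hat f(\bar u)$; without the local single-valuedness granted by Theorem \ref{thm:prox_regularity}(i), condition \eqref{eq:cp_composite_splitting_2} alone would not prevent distinct workers from settling into different prox branches, and the consensus conclusion could fail. Everything else is bookkeeping around the separability of $f$ and $\phi$ and the Legendre calibration $\nabla \hPhi(0) = 0$.
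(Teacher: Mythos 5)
Your proposal is correct and follows essentially the same route as the paper: the first claim via Corollary \ref{cor:trstat} and the block separability of $f$, $\phi$ and the structure of $A$, and the consensus claim via the local single-valuedness of the prox from Theorem \ref{thm:prox_regularity}(i), the identity $0 = M\nabla\hPhi(\bar u - \bar z)$ from \eqref{eq:cp_composite_splitting_1}, and the fact that (A5) together with Lemma \ref{lem:legendre_props} forces $\nabla\hPhi(w)=0$ only at $w=0$. You are, if anything, slightly more explicit than the paper about the factorization $\env f(Au)=\sum_{j}e_\lambda^{\hPhi}f_j(u)$ and the role of single-valuedness in preventing workers from landing on different prox branches, but the underlying argument is identical.
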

\begin{proof}
The first part of the corollary follows directly from Corollary~\ref{cor:trstat} and the special structure of $A,f,g$ and $\phi$.
For the second part we invoke Theorem~\ref{thm:prox_regularity} (i) and obtain that for some $\lambda > 0$, sufficiently small $P_\lambda^{\hPhi} \hat{f}$ is single-valued at $\bar{u}$ and that
$
\bar{z}_j = P_\lambda^{\hPhi} \hat{f}(\bar{u}),
$
showing that all $\bar{z}_j$'s are equal.
From \eqref{eq:cp_composite_splitting_1} it follows that
$$
0 = A^\top \nabla \phi(A \bar{u} - \bar{z})  = M \nabla\hPhi(\bar{u} - P_\lambda^{\hPhi} \hat{f}(\bar{u})).
$$
By Assumptions (A1), (A5) and Lemma~\ref{lem:legendre_props} we have $\nabla \hPhi(w) = 0 $ if and only if $w = 0$, and therefore $\bar{u} = \bar{z}_j$ and $\bar{y}_j = 0$. From \eqref{eq:cp_composite_splitting_2} we know $0 \in \partial \hat{f}(\bar{z}_j) = \partial \hat{f}(\bar{u})$.
\end{proof}

\section{Numerical Experiments} \label{sec:numerics}
\begin{table*}[h]
\caption{
Comparison of different potentials on the stochastic consensus training of a deep neural network on MNIST. Results after $30,000$ iterations, so that each of the 4 workers completes 10 epochs with batch size $20$. For each potential we report the best values in performance over all configurations. Our evaluation suggests that quadratic is not the best smoothing potential in general.
}
\center
{\footnotesize
\begin{tabular}{cccccc}
\cmidrule{1-6}
$\phi$  & Objective & Train Loss (nll) & Train Error & Test Loss (nll) & Test Error \\
\midrule
quad, \cite{ZCL15} & 2.49 & 2.47 & 0.00 \% & 294.12 & 0.65 \% \\[0.5ex]
cubic & 7.44 & 6.87 & 0.00 \% &  {\bf{231.42}} &  {\bf{0.56}} \% \\[0.5ex]
tan & {\bf{0.90}} & {\bf{0.83}} & 0.00 \% & 300.84 & 0.62 \% \\[0.5ex]
tan-sep & 0.91 & 0.87 & 0.00 \% & 306.91 & 0.64 \% \\[0.5ex]
log & 2.36 & 2.35 & 0.00 \% & 299.60 & 0.64 \% \\[0.5ex]
log-sep & 2.46 & 2.44 & 0.00 \% & 299.82 & 0.67 \% \\[0.5ex]
\bottomrule
\end{tabular}
}
\label{tab:mnist}
\end{table*}
In the experiments we consider the stochastic distributed training of a deep neural network with Algorithm~\ref{alg:salm} where all workers have access to the entire training set, i.e. $\mathcal{I}_j= \mathcal{I}$ for all $j$ in terms of the model~\eqref{eq:model_easgd}.
More precisely, we report comparisons of different choices of potentials.
We manually set $\hPhi$ as
\begin{align}
\hPhi(w_j):=\sum_{l=1}^L \hhPhi\left(\frac{w_{jl}}{\eta}\right),
\end{align}
where $\eta$ is a scaling parameter (in addition to $\lambda$), and $l$ indexes the (learnable) layers.
The different choices of $\hhPhi$ are summarized in Table~\ref{tab:potentials}.
\begin{table}[h]
\caption{
Choices for $\hhPhi$. To ensure that cubic satisfies Assumption (A3) a small quadratic may be added.}
\center

{\footnotesize
\begin{tabular}{ccc}
\cmidrule{1-3}
 acronyms& $\hhPhi$ & $\dom \hhPhi$ \\
\midrule
quad & $\|\cdot\|^2$ & $\bR^m$ \\[0.5ex]
cubic & $\|\cdot\|_3^3$ & $\bR^m$ \\[0.5ex]
tan & $\tan(\|\cdot\|^2)$ & $\left\{ w : \|w\| < \sqrt\frac{\pi}2 \right\}$ \\[0.5ex]
tan-sep & $\sum_i \tan((\cdot)_i^2)$ & $\left\{ w : |w_i| < \sqrt\frac{\pi}2\right\}$ \\[0.5ex]
log & $-\log(1-\|\cdot\|^2)$ & $\left\{ w : \|w\| < 1 \right\}$ \\[0.5ex]
log-sep & $\sum_i-\log(1-(\cdot)_i^2)$ & $\left\{ w : |w_i| < 1\right\}$ \\[0.5ex]
\bottomrule
\end{tabular}
}
\label{tab:potentials}
\end{table}
Since both the log- and tan-potentials have bounded domains, we incorporate a line search in our algorithm to ensure that the iterates stay feasible. Note that cubic does not satisfy Assumption (A3) as its Hessian is 0 at 0. Yet we include it in our numerical evaluation.

We apply a variant of our Algorithm \ref{alg:salm} with constant step size, that in addition incorporates Nesterov momentum \cite{sutskever2013importance} in the SGD updates of the $z_j$. Note that for $\phi := \frac{1}{2}\|\cdot\|^2$ this algorithm specializes to the synchronous EASGD, resp.~mEASGD \cite{ZCL15}. 

We report results of the training of a classifier on the MNIST dataset resorting to the standard LeNet-5 CNN architecture \cite{lecun1998gradient} given as
\begin{quotation}
\noindent
Conv$_{20, 5, 1}$ $\to$ ReLU $\to$ Pool$_{2,2}$ $\to$ Conv$_{50, 5, 1}$ $\to$ ReLU $\to$ Pool$_{2,2}$  $\to$ FC $\to$ Softmax
\end{quotation}

In Table \ref{tab:mnist} we compare different potentials $\hhPhi$. To this end we perform a grid search over different learning rates $\sigma=\tau \in \{0.001, 0.005\}$, momentum parameters $\kappa \in \{0.9, 0.99\}$, and different scalings of the potentials $\lambda \in \{0.1, 0.05, 0.025, 0.01, 0.005, 0.0025\}$, $\eta \in \{0.5, 1, 2\}$. We set the number of workers to 4, the batch size to 20 and the regularization parameter $\nu=10^{-4}$. We run the algorithm for $30,000$ iterations, so that each worker completes 10 epochs.
For each potential we report the best performances over all scalings and configurations in Table \ref{tab:mnist}.

In terms of training loss, the tan- and log-potentials seem slightly superior, while the non-separable variants yield slightly better performance than the separable ones. The cubic potential performs worst in terms of training loss.
All potentials consistently yield 0\% training error after 10 epochs (for each worker).

Notably, we observe that the cubic potential performs better, in terms of low test error, than all other potentials. This is even true for a whole range of scalings. 

In Figure~\ref{fig:mnist}, 
we show convergence plots for a representative configuration of our Algorithm~\ref{alg:salm} for 4 and 8 workers respectively. In addition, we show a comparison to plain SGD with Nesterov momentum (mSGD). The different EASGD variants ``see'' 4 to 8 times more training examples than mSGD within one iteration. As a result the distributed method is stable at much higher learning rates and requires fewer iterations than mSGD to achieve low objective value.

\section{Conclusion}
In this work we have considered an anisotropic generalization of the proximal mapping and Moreau-envelope. We derived a gradient formula for the Moreau-envelope in the nonconvex setting based on prox-regularity. This allows us to equivalently (in terms of stationary points) reformulate the problem as a splitting problem which is amenable to (stochastic) alternating minimization. As an application of our theory we characterize stationary points of the penalty objective that is optimized by the elastic averaging SGD method. There, our theory can be used to obtain a robust measure of stationarity. Through numerical validations we demonstrated the relevance of our theory and algorithm on the important task of consensus training of deep neural networks.

\subsection*{Acknowledgement}
The work was supported by the DFG Research Grant ``Splitting Methods for 3D Reconstruction and SLAM". 
%(No.~410252185)
\newpage

\bibliographystyle{plain}
\bibliography{submission}

\end{document}